\scriptsize\color{red!50!black}, % 
\ttfamily\color{green!30!black}, % t
\tiny\color{green!50!black},
\def\cdot{}
\let\tilde=\widetilde
\long\def\beginpgfgraphicnamed#1#2\endpgfgraphicnamed{\includegraphics{#1}}
\newlength{\figurewidth}
\DeclareMathOperator{\co}{co}
\newcommand{\field}[1]{\ensuremath{\mathbb{#1}}}
\newcommand{\R}{\field{R}}
\newcommand{\Rp}{{\field{R}_+}}
\newcommand{\Rnp}{\ensuremath{\field{R}^n_+}}
\newcommand{\Rn}{\ensuremath{\field{R}^n}}
\newcommand{\Rpnn}{\ensuremath{\field{R}^{n\times n}_+}}
\newcommand{\K}{\ensuremath{\mathcal{K}}}
\newcommand{\Kinf}{\ensuremath{\mathcal{K}_\infty}}
\newcommand{\id}{\ensuremath{\mbox{id}}}
\DeclareMathOperator{\diag}{diag}
\DeclareMathOperator{\Int}{int}
\let\epsilon=\varepsilon
\title{%
Stability verification for monotone systems using homotopy algorithms
}%
\author{Bj\"orn S.~R\"uffer \and Fabian R.\ Wirth}%
\institute{%
  B.S.\ R\"uffer \at %
  Department of Electrical and Electronic Engineering, %
  University of Melbourne, %
  Parkville VIC 3010, %
  Australia %
  %\email{brueffer@unimelb.edu.au} %
  \email{bjoern@rueffer.info} %
  \and %
  F.R.\ Wirth \at %
  Institut f\"ur Mathematik, %
  Universit\"at W\"urzburg, %
  Am Hubland, %
  97074  W\"urzburg, %
  Germany %
  \email{wirth@mathematik.uni-wuerzburg.de} %
}%
\date{Received: date / Accepted: date}%
\begin{document}
\maketitle

\begin{abstract}
  A  monotone  self-mapping  of  the  nonnegative  orthant  induces  a
monotone  discrete-time dynamical  system  which evolves  on the  same
orthant. If with respect to  this system the origin is attractive then
there  must  exists points  whose  image  under  the monotone  map  is
strictly  smaller  than  the  original point,  in  the  component-wise
partial  ordering. Here  it  is shown  how  such points  can be  found
numerically, leading to  a recipe to compute order  intervals that are
contained in the region of  attraction and where the monotone map acts
essentially  as  a  contraction.   An  important  application  is  the
numerical verification of  so-called generalized small-gain conditions
that appear in the stability theory of large-scale systems.
  \subclass{%
    % Actually, these are also valid MSC2010 classifications.
    %
    93C55 % Systems theory; control: discrete-time systems
    \and %
    47H07 % Operator theory: Monotone and positive operators on ordered Banach spaces or other ordered topological vector spaces
    \and %
    65H20 % Numerical analysis: Global methods, including homotopy approaches
    % \and %
  }
\end{abstract}

\section{Introduction}
\label{sec:introduction}

By $\Rp$ we denote the nonnegative real numbers, $[0,\infty)$. A class
$\K$ function is a continuous function $\gamma\colon\Rp\to\Rp$ that
satisfies $\gamma(0)=0$ and is strictly increasing.  The function
$\gamma$ is of class $\Kinf\subset\K$ if in addition $\gamma$ is
unbounded. Note that with respect to composition the class $\Kinf$ is
a group and the class $\K$ a semi-group. Moreover, sums and positive multiples
of $\K$ functions are again $\K$ functions.

The nonnegative orthant $\Rnp$ induces a partial order on $\Rn$, which
coincides with the component wise ordering, and we write for
$x,y\in\Rn$, $x\leq y$ if $y-x\in\Rnp$, $x<y$ if [$x\leq y$ and $x\ne
y$], and $x\ll y$ if $y-x\in\Int\Rnp$, the interior of $\Rnp$.

A map $T\colon \Rnp\to\Rnp$ is monotone if $x\leq y$ implies $Tx\leq
Ty$. For any $\Kinf$ function $\rho$, the map
$D=\diag(\rho)\colon\Rnp\to\Rnp$ defined by $(Dx)_{i}=\rho(x_{i})$ is
an example of a monotone map.

We consider the following problem:
\begin{problem}
  \label{problem:1}
  Let a monotone, continuous map $T\colon \Rnp\to\Rnp$ with $T(0)=0$
  and a real number $r>0$ be given. Find $s^{*}\in\Rnp$ satisfying
  \begin{enumerate}
  \item $T s^{*}\ll s^{*}$ (which implies $s^{*}\gg0$),
  \item $\|s^{*}\|_{1}=r$, and
  \item $T s \ngeq s$ for all $s\in [0,s^{*}], s\ne0$.\qed
  \end{enumerate}
\end{problem}
The existence of such an $s^{*}$ (for sufficiently small $r>0$) is a
necessary and sufficient condition for asymptotic stability of the
origin with respect to the discrete time system
\begin{equation}
  \label{eq:8}
  s^{+}=Ts,\quad s\in\Rnp.
\end{equation}
It also arises as a so-called generalized small-gain condition
\cite{RufferDashkovskiy:2009:Local-ISS-of-large-scale-interconnection:}.
If it is satisfied then the set $[0,s^{*}]$ is contained in the region of
attraction. Moreover, in this case also the system with input,
\begin{equation}
  \label{eq:9}
  s^{+}=Ts+w,\quad s,w\in\Rnp,
\end{equation}
is \emph{locally input-to-state stable}
\cite{GaoLin:2000:On-equivalent-notions-of-input-to-state-:,
  JiangWang:2001:Input-to-state-stability-for-discrete-ti:}, and
knowledge of $s^{*}$ yields estimates for the sets of admissible
inputs and initial conditions which result in bounded outputs, cf.\
\cite{RufferDashkovskiy:2009:Local-ISS-of-large-scale-interconnection:}.

The numerical solution of this problem is interesting in two aspects:
First of all, it provides a numerical way to make a qualitative
assertion. Secondly, this assertion is not only of qualitative nature
(i.e., a system is stable in some sense), but also quantitative in
that an estimate for the region of attraction is obtained as well.

The numerical solution of this problem is interesting for several
applications: In the context of large-scale interconnections of
nonlinear systems the solution to Problem~\ref{problem:1} can assert
(local) input-to-state stability of interconnections of many systems
in arbitrary interconnection topology. This in turn can be useful for
formation
control~\cite{TannerPappasKumar:2004:Leader-to-formation-stability:}
or the effective implementation of decentralized model predictive
control~\cite{RaimondoMagniScattolini:2007:Decentralized-MPC-of-nonlinear-systems:-:}.
Furthermore, in the same context the knowledge of $s^{*}$ can be used
to find a locally Lipschitz continuous Lyapunov function for the
composite large-scale system.  Another application is in queuing
theory. Here the solution to Problem~\ref{problem:1} can be used to
ascertain that a given switching policy stabilizes a flow switching
network via the use of a monotone monodromy operator, cf.\
\cite{FeoktistovaMatveev:2009:Dynamic-interactive-stabilization-of-the:},
at least for specified range of initial buffer levels and bounded
inflow.

It is known that a solution to Problem~\ref{problem:1} must exist for
any $r>0$ if the origin is globally attractive with respect to
\eqref{eq:8}. In this case necessarily it holds that $Ts\ngeq s$ for
all $s\in\Rnp, s\ne0$, and by virtue of a topological fixed point
result, for every $r>0$, there exists an $s^{*}$ with
$\|s^{*}\|_{1}=r$ satisfying $Ts^{*}\ll s^{*}$. However, even if
$T\colon \Rnp\to\Rnp$ is monotone, continuous, and satisfies $T(0)=0$
as well as $Ts\ngeq s$ for all $s\in\Rnp, s\ne0$, the origin is not
necessarily \emph{globally} attractive with respect to~\eqref{eq:8}
(but it is so locally). For $T$ of particular form a sufficient
condition for global asymptotic stability of the origin with respect
to \eqref{eq:8} is the existence of a diagonal map
$D=\diag(\id+\rho)\colon \Rnp\to\Rnp$ with $\rho\in\Kinf$ such that
for all $s\ne0$, $(D\circ T)s\ngeq s$, or, equivalently $(T\circ D)
s\ngeq s$. In this case the system \eqref{eq:9} is input-to-state
stable \cite{Ruffer:2009:Monotone-inequalities-dynamical-systems-:,
  Ruffer:2009:Small-gain-conditions-and-the-comparison:}. 

Also known is that if $T$ satisfies $T(s\oplus v) = Ts \oplus Tv$ for
all $s,v\in\Rnp$, where $\oplus$ denotes component-wise maximization,
then $T$ must be of the form $(Ts)_{i}=\max_{j}\gamma_{ij}(s_{j})$ for
all $i,j$, where $\gamma_{ij}\colon\Rp\to\Rp$ are nondecreasing
functions. In this case $T$ has been termed \emph{max-preserving}. Here, the
condition $Ts\ngeq s$ for all $s\ne0$ is equivalent to the \emph{cycle
  condition}, which assumes that $\gamma_{i_{1}i_{2}} \circ
\gamma_{i_{2}i_{3}} \circ \ldots \circ \gamma_{i_{k-1}i_{k}} \circ
\gamma_{i_{k}i_{k}}<\id$ for all finite ordered sequences
$(i_{1},i_{2},\ldots,i_{k})\subset \{1,\ldots,n\}^{k}$. If this
condition holds then with $e$ denoting the vector $(1,\ldots,1)^{T}$
and $t>0$ one has for $q(t):=\max\{t\cdot e, T(t\cdot e), \ldots,
T^{n-1} (t\cdot e)\}\in\Rnp$ a continuous path $q\colon\Rp\to\Rnp$
which is unbounded and nondecreasing in every component and satisfies
$T(q(t))\leq q(t)$. Re-parametrization yields a path $\tilde q(r)$
satisfying $\|\tilde q(r)\|_{1}=r$, which can be interpreted as a
parametrized ``almost'' solution to Problem~\ref{problem:1}, cf.\
\cite{KarafyllisJiang:2009:A-Vector-Small-Gain-Theorem-for-General-:}.

In the linear case the action of $T$ can be represented by
multiplication with a nonnegative matrix (i.e., every component is
nonnegative), which we also denote by $T\in\R_{+}^{n\times n}$. It is
known that the following are equivalent: 
\begin{enumerate} 
\item $Ts\ngeq s$ for all $s\in\Rnp, s>0$;
\item there exists a $D=\diag(\id+\rho)$ with $\rho\in\Kinf$ such that
  $(T\circ D)s\ngeq s$ for all $s\in\Rnp, s>0$ (equivalently, $(D\circ
  T)s\ngeq s$ for all $s\in\Rnp, s>0$);
\item the spectral radius of $T$ is less than one;
\item there exists a unit vector $s^{*}\gg0$ (with respect to the
  1-norm) such that $Ts^{*}\ll s^{*}$, hence also the ray given by
  $r\cdot s^{*}$, $r\in\Rp$ satisfies $T(rs^{*})\ll rs^{*}$ for $r>0$ and
  $\|rs^{*}\|_{1}=r$;
\item the inverse of $I-T$ exists and is given by the nonnegative
  matrix $\sum_{k\geq0} T^{k}$.
\end{enumerate}
The existence of the vector $s^{*}$ is of course related to the
classical Perron-Frobenius theory. If $T$ is primitive then $s^{*}$ is
just the positive Perron-Frobenius root corresponding to the maximal
eigenvalue, which coincides with the spectral radius.  Further
extensions of the classical Perron-Frobenius theory exist for special
classes of nonlinear maps, in particular for homogeneous maps and for
concave maps, cf.\
\cite{AeyelsDe-Leenheer:2002:Extension-of-the-Perron-Frobenius-theore:,
  Krause:2001:Concave-Perron-Frobenius-theory-and-appl:}.

In this paper we propose the use of a homotopy method to find a point
$s^{*}$ satisfying $Ts^{*}\ll s^{*}$, $\|s^{*}\|_{1}$ for any given
$r>0$. Our method of choice is the K1 algorithm proposed by Eaves
\cite{Eaves:1972:Homotopies-for-computation-of-fixed-poin:} as a
computational version of a topological fixed point theorem. There are,
however, more elaborate choices of related algorithms available as
well, cf.\
\cite{AllgowerGeorg:1980:Simplicial-and-continuation-methods-for-:,
  AllgowerGeorg:1990:Numerical-continuation-methods:} for an
overview. While these algorithm have the potential of admitting faster
convergence, they tend to be more complicated to implement. One
particular advantage of homotopy methods is that they offer global
convergence: If there exists a point $s^{*}$ with $\|s^{*}\|_{1}=r$ with
the desired properties then it will be found. This has to bee seen in
contrast to methods based on Newton steps, which only guarantee
convergence if the algorithm is started sufficiently close to
$s^{*}$. Moreover, Newton methods usually assume some level of
smoothness, whereas homotopy methods only require continuity.

Once the point $s^{*}$ has been computed, the remaining verification
of property~3 in Problem~\ref{problem:1} is an easy task: It only
needs to be checked that the sequence $T^{k}(s^{*})$ converges to
zero. It is then a consequence of monotonicity that property~3 must
hold.

The paper is organized as follows.  In
Section~\ref{sec:monot-maps-monot} a few facts about monotone
self-mappings of the nonnegative orthant and their induced
discrete-time systems are recalled. In particular, the topological
fixed point theorem by Knaster, Kuratowski, and Mazurkiewicz is
discussed. A brief and informal description of some of the underlying
principles of Eaves' and other homotopy algorithms is given in
Section~\ref{sec:fixed-point-algor}. A MATLAB version of Eaves' K1
algorithm is provided in
Section~\ref{sec:Impl-k1}. Section~\ref{sec:algor-solut-probl}
explains a short procedure to solve Problem~\ref{problem:1} based on
the use of a homotopy algorithm and the computation of one trajectory
of system~\eqref{eq:8}. This is followed by several numerical examples
in Section~\ref{sec:examples}.

\section{Monotone maps and monotone discrete-time systems}
\label{sec:monot-maps-monot}

This section collects a few theoretical results from the literature. 

Throughout this section let $T\colon\Rnp\to\Rnp$ be monotone and
continuous with $T(0)=0$. Consider also the induced discrete-time
systems~\eqref{eq:8} and~\eqref{eq:9}. Denote their respective
solutions for initial condition $s^{0}\in\Rnp$ and, in case of
\eqref{eq:9}, input sequence $w=\{w(k)\in\Rnp\colon k\geq0\}$, at time
$k\geq0$ by $\phi_{\eqref{eq:8}}(k,s^{0})$ and
$\phi_{\eqref{eq:9}}(k,s^{0},w)$, respectively. If the reference to a
particular system is clear from the context we omit the reference to
the system. Observe that both systems satisfy the \emph{ordering of
  solutions principle:} If $s^{0}\leq v^{0}$ and for all $k\geq0$,
$w(k)\leq u(k)$ (which we abbreviate by $w\leq u$), then also
$\phi_{\eqref{eq:8}}(k,s^{0})\leq\phi_{\eqref{eq:8}}(k,v^{0})$ and
$\phi_{\eqref{eq:9}}(k,s^{0},w)\leq\phi_{\eqref{eq:9}}(k,v^{0},u)$ for
all $k\geq0$.

We denote the sphere in $\Rnp$ of radius $r>0$ with respect to the
1-norm by $S_{r}=\{s\in\Rnp\colon
\|s\|_{1}=\sum_{i}s_{i}=r\}$. Observe that $S_{r}$ is an $(n-1)$
simplex. 

\begin{theorem}
  \label{thm:1}
  Let $T\colon\Rnp\to\Rnp$ be monotone and continuous with $T(0)=0$.
  Assume that the origin is attractive with respect to~\eqref{eq:8}
  and denote the domain of attraction by $\mathcal{B}$. Then the
  following assertions holds:
  \begin{enumerate}
  \item For every $s\in\mathcal{B}$, $s\ne0$, necessarily $Ts\ngeq s$.
  \item If $R\in\R$, $R>0$ is such that $S_{R}\subset \mathcal{B}$
    then for all $r\in(0,R]$ there exists a point $s\in S_{r}$, $s\gg
    0$, satisfying $Ts\ll s$.
  \item The origin is stable in the sense of Lyapunov, i.e., for every
    $\epsilon>0$ there exists a $\delta>0$ such that
    $\|s\|_{1}<\delta$ implies $\|Ts\|_{1}<\epsilon$.
  \end{enumerate}
\end{theorem}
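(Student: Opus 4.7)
I would handle the three claims in order, since each uses the previous one. For part 1, assume for contradiction that $Ts \ge s$ with $s \in \mathcal{B}$ and $s \ne 0$. Monotonicity and induction give $T^k s \ge s$ for all $k$; choosing any coordinate $i$ with $s_i > 0$ produces $(T^k s)_i \ge s_i > 0$, which contradicts $\phi(k,s) \to 0$. No topology is needed here, only the ordering-of-solutions principle.

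Part 2 is the main work, and I plan to invoke the Knaster--Kuratowski--Mazurkiewicz (KKM) theorem, which is exactly the tool foreshadowed in Section~\ref{sec:monot-maps-monot}. Fix $r \in (0,R]$, view $S_r$ as the $(n-1)$-simplex with vertices $v_i = r e_i$, and consider for a $\delta > 0$ to be chosen the closed sets
\[
F_i^\delta := \{\, s \in S_r : (Ts)_i \le s_i - \delta\,\}, \qquad i = 1, \ldots, n.
\]
The KKM face condition asks that on each face $\co\{v_i : i \in I\} = \{s \in S_r : s_j = 0 \text{ for } j \notin I\}$ some $i \in I$ satisfies $(Ts)_i \le s_i - \delta$. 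Part 1 supplies the structural ingredient: on such a face some coordinate $j$ has $(Ts)_j < s_j$, and this $j$ must lie in $I$, since otherwise $s_j = 0$ would force $(Ts)_j \ge 0$ and rule out strictness.

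The delicate step, and what I view as the main obstacle, is promoting this pointwise strictness to a uniform margin $\delta > 0$. I would argue by compactness: on the compact face indexed by $I$ the continuous function $s \mapsto \max_{i \in I}(s_i - (Ts)_i)$ is strictly positive, hence bounded below by some $\delta_I > 0$; taking $\delta := \min_I \delta_I$ over the finitely many non-empty subsets $I$ works uniformly on every face at once. KKM then produces $s^* \in \bigcap_i F_i^\delta$, and the inequality $Ts^* \le s^* - \delta\,\mathbf{1}$ delivers both $Ts^* \ll s^*$ and $s^* \ge \delta\,\mathbf{1} \gg 0$, as required.

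Part 3 follows by combining part 2 with forward invariance of order intervals. Given $\epsilon > 0$, pick $r := \min\{\epsilon, R\}$ and apply part 2 to obtain $s^* \in S_r$ with $Ts^* \ll s^*$ and $s^* \gg 0$; then $\delta := \min_i s^*_i > 0$ suits the claim, since any $s \in \Rnp$ with $\|s\|_1 < \delta$ satisfies $s_i \le \|s\|_1 < s^*_i$ componentwise, so $s \le s^*$, and monotonicity of $T$ together with $Ts^* \le s^*$ gives $\|Ts\|_1 \le \|s^*\|_1 = r \le \epsilon$. Iterating the same bound yields the more natural Lyapunov estimate $\|\phi(k,s)\|_1 \le \epsilon$ for all $k \ge 0$.
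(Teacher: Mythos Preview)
Your argument is correct and follows the same three-step plan as the paper: Part~1 by direct contradiction via monotonicity, Part~2 by KKM on the simplex $S_r$, and Part~3 by trapping trajectories in the order interval $[0,s^*]$. The one genuine difference lies in the execution of Part~2. The paper invokes the \emph{open-set} extension of the KKM principle due to Lassonde~\cite{Lassonde:1990:Sur-le-principe-KKM:} directly on the sets $\Omega_i=\{s\in S_r:(Ts)_i<s_i\}$, obtaining $\bigcap_i\Omega_i\ne\emptyset$ in one stroke. You instead stay with the classical closed-set KKM theorem applied to $F_i^\delta=\{s\in S_r:(Ts)_i\le s_i-\delta\}$ and insert a compactness step to manufacture the uniform margin $\delta>0$. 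Your route is more self-contained in that it avoids the open-cover variant, at the cost of the extra uniformisation argument; the paper's route is shorter once the Lassonde extension is taken for granted. Two small points worth tightening: in Part~2 you should record that $S_r\subset\mathcal{B}$ for all $r\le R$ (by the ordering of solutions, since $(R/r)s\in S_R\subset\mathcal{B}$ dominates any $s\in S_r$) before appealing to Part~1 on the faces; and in Part~3 the chain $Ts\le Ts^*\ll s^*$ already delivers the strict inequality $\|Ts\|_1<\|s^*\|_1=r\le\epsilon$, so you need not settle for $\le$.
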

A proof can be found in
\cite{Ruffer:2009:Monotone-inequalities-dynamical-systems-:}. The
first assertion is not difficult to prove directly, and the third
follows from the second. The second assertion is the most technical,
and it is based on the covering theorem by
Knaster, Kuratowski, and Mazurkiewicz
(KKM)
\cite{KnasterKuratowskiMazurkiewicz:1929:Ein-Beweis-des-Fixpunktsatzes-fur-n-dime:}. Our
reasoning is based on the extension given in
\cite{Lassonde:1990:Sur-le-principe-KKM:} which allows to consider
coverings of open instead of closed sets.  The argument is basically
the following: One has $Ts\ngeq s$ for all $s\in\ S_{r}$, and $S_{r}$
is a simplex. This ordering condition implies that the simplex is
covered by the sets
\[ 
\Omega_{i}=\{s\in S_{r}\colon (Ts)_{i}< s_{i}\}.
\]
Moreover, no $\Omega_{i}$ can contain the face $\sigma_{i}\subset
S_{r}$ opposite of the vertex $r\cdot e_{i}\in \Omega_{i}$, where
$e_{i}$ denotes the $i$th unit vector. On the other hand, every
$k$-dimensional simplex $\sigma=\co\{r\cdot
e_{i}\}_{i\in\{i_{1},\ldots,i_{k}\}}$, $k\leq n-1$, is contained in
the union $\bigcup_{i\in\{i_{1},\ldots,i_{k}\}}\Omega_{i}$.
These are the prerequisites of the KKM Theorem, which then ascertains
that the intersection $\bigcap\Omega_{i}$ must be nonempty. The proof
of the KKM Theorem is based on the fact that every simplicial
refinement of $S_{r}$ must contain at least one special simplex, which
is again covered by all sets $\Omega_{i}$ but not by any strict
subclass of $\{\Omega_{1},\ldots,\Omega_{n}\}$. As the size of the
refinements tends to zero, this special simplex contracts to a point
--- the point of interest $s^{*}\in S_{r}$ satisfying $Ts^{*}\ll
s^{*}$.  In essence this proof technique relies on a fine
discretization of $S_{r}$ and an exhaustive search, which is not
implementation friendly if $n$ becomes large.

\section{Homotopy based fixed point algorithms}
\label{sec:fixed-point-algor}

An alternative proof of the KKM result has been given in
\cite{Eaves:1972:Homotopies-for-computation-of-fixed-poin:} by means
of a homotopy algorithm. In contrast to the original proof, here the
initial simplex is successively refined in every iteration, thus the
area that must contain $s^{*}$ becomes smaller and smaller as
iterations progress. We give a simplified account on the ideas
behind this algorithm.

As in the original KKM paper, each point in $s\in S_{r}$ is assigned
an integer label
\begin{equation}
  l(s)=\max \{i\colon s\in \Omega_{i}\}.\label{eq:11}
\end{equation}
Observe that $l(r\cdot e_{i})=i$ for $i=1,\ldots,n$. To keep things
uncluttered, let us call a set of $k$ distinct points (vertices)
$\sigma=\{v^{i}\in\Rnp\}_{i=1}^{k}$ a $(k-1)$-set.  The barycentric
centre of such a $(k-1)$-set is the point $v=\frac{1}{k}
\sum_{i=1}^{k}v_{k}$.  We call an $(n-1)$-set $\sigma$ \emph{complete}
if all vertices have distinct labels. Equivalently, every label
$1,\ldots,n$ gets assigned to a vertex exactly once. Observe that the
$(n-1)$-set $\sigma^{0}=\{r\cdot e_{i}\}_{i=1}^{n}$ is complete if
$Ts\ngeq s$ for all $s\in S_{r}$.

Now one can also consider an $n$-set $\tau$ obtained from a given
$(n-1)$-set $\sigma$ by adjoining one additional vertex $v$ taken from
the convex hull $\co(\sigma)$ of the vertices of $\sigma$. For example,
one could augment $\sigma$ with its barycentric centre to obtain such
an $n$-set $\tau$.

The following observation is at the core of the fixed point algorithms
by Eaves and also at the core of all related homotopy algorithms, cf.\
also~\cite[Thm.~1.7]{AllgowerGeorg:1980:Simplicial-and-continuation-methods-for-:}.

\begin{theorem}
  \label{thm:2}
  Every $n$-set has either none or exactly two complete
  $(n-1)$-subsets.
\end{theorem}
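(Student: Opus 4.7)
My plan is to treat this as a purely combinatorial statement about the labels attached to the vertices of $\tau$, independent of any geometric structure. Following the paper's convention, an $n$-set $\tau$ consists of $n+1$ distinct vertices, each carrying a label from $\{1,\ldots,n\}$ by \eqref{eq:11}. A complete $(n-1)$-subset is obtained by deleting exactly one vertex of $\tau$ and having the remaining $n$ labels be a permutation of $\{1,\ldots,n\}$. So the question is: among the $n+1$ possible single-vertex deletions, how many produce a complete subset?

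The key observation is a pigeonhole dichotomy on the multiset of labels of the $n+1$ vertices of $\tau$. I would split into two cases. First, suppose some label $j\in\{1,\ldots,n\}$ does not occur among the vertices of $\tau$. Then any $n$-vertex subset of $\tau$ still misses label $j$, and therefore cannot be complete; in this case the count is zero. Second, suppose every label $1,\ldots,n$ is realized by at least one vertex. Since we have $n+1$ vertices and only $n$ labels, and each label appears at least once, a straightforward counting argument shows that exactly one label, say $j^{*}$, is attained by exactly two vertices, while every other label is attained by exactly one vertex.

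In this second case I would argue directly as follows. Removing a vertex whose label is unique destroys that label from the subset, so the resulting $n$-vertex collection fails to be complete. Removing one of the two vertices carrying the doubled label $j^{*}$ leaves each of the labels $1,\ldots,n$ represented exactly once, hence the resulting $(n-1)$-subset is complete. Since there are exactly two vertices with label $j^{*}$, this yields exactly two complete $(n-1)$-subsets. Combining the two cases gives the required dichotomy.

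I do not anticipate a serious obstacle here: the result is essentially a combinatorial pigeonhole statement, and no use is made of the convexity of $\co(\sigma)$, continuity of $T$, or any fixed point machinery. The only minor care needed is bookkeeping the convention that a $k$-set has $k+1$ vertices, so that an $n$-set $\tau$ has $n+1$ vertices and its $(n-1)$-subsets correspond to single-vertex removals; once this is fixed, the proof reduces to the case analysis above.
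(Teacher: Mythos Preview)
Your proof is correct and follows essentially the same approach as the paper: a pigeonhole dichotomy on whether all $n$ labels occur among the $n+1$ vertices, with the observation that in the affirmative case exactly one label is doubled, yielding precisely two complete $(n-1)$-subsets upon deletion of either duplicate vertex. Your write-up is, if anything, slightly more explicit in spelling out why removing a uniquely-labeled vertex fails and why the doubled label is unique, but the argument is the same.
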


\begin{proof}
  Let $\tau=\{v^{1},\ldots, v^{n+1}\}$ denote the given $n$-set with
  its $n+1$ vertices.  Now either $l\big(\{v^{1},\ldots, v^{n+1}\}\big)$
  contains the set $\{1,\ldots,n\}$. In this case one label must get
  assigned twice, i.e., there exists a unique $i$ and $j,k$ such that
  $l(v^{j})=l(v^{k})=i$. In this case $\tau\setminus\{v^{j}\}$ and
  $\tau\setminus v^{k}$ are both complete $(n-1)$-sets. All other
  $(n-1)$-subsets must contain $v^{j}$ and $v^{k}$ and hence cannot be
  complete.  And if $\{1,\ldots,n\}\not\subseteq l\big(\{v^{1},\ldots,
  v^{n+1}\}\big)$ then no such $(n-1)$-set can exist.\qed
\end{proof}

The basic algorithm is now the following: $\sigma^{0}=\{r\cdot
e_{i}\}_{i=1}^{n}$ serves as a complete \emph{entry
  $(n-1)$-set}. Successively, another vertex is added, say the
barycentric centre $v$ of $\sigma^{0}$, to obtain the $n$-set
$\tau=\sigma^{0}\cup \{v\}$. By Theorem~\ref{thm:2}, $\tau$ contains
exactly one $(n-1)$-subset distinct from $\sigma^{0}$, which we denote
by $\sigma^{1}$. Progressing inductively, one obtains a sequence of
complete $(n-1)$-sets $\sigma^{k}$ whose area tends to zero.

The catch, however, is that the sequence does not necessarily contract
to a point. Instead, $\sigma^{k}$ may become ``long and thin'' as $k$
becomes large. To prevent this kind of behaviour, a more sophisticated
choice of new vertices $v$ is required. For this choice there exist a
variety of alternatives. The paper by
Eaves
\cite{Eaves:1972:Homotopies-for-computation-of-fixed-poin:} proposes
two such choices, which are named K1 and K2. Both guarantee
convergence. Of these two choices K1 is the easiest and shortest to
implement, and therefore we have chosen K1 for this exposition. On the
other hand, it does not converge as quickly as K2, as has already been
observed in
\cite{Eaves:1972:Homotopies-for-computation-of-fixed-poin:}. It should
be noted, however, that even more sophisticated pivoting strategies
and restart algorithms can be found in
\cite{AllgowerGeorg:1980:Simplicial-and-continuation-methods-for-:,
  AllgowerGeorg:1990:Numerical-continuation-methods:,
  AllgowerGeorg:1997:Numerical-path-following:} and the references
contained therein. A detailed discussion of these is far beyond the
scope of this paper. In principle any of these could have been used
instead of our particular choice for Eaves' K1 algorithm here.

\section{Implementation of Eaves' K1 algorithm}
\label{sec:Impl-k1}

The integer labeling function~\eqref{eq:11} is numerically not
feasible, instead we use the function
\begin{equation}
  \label{eq:12}
  l_{\epsilon}(s)=\max \{i\colon (Ts)_{i}+\epsilon\leq s_{i}\},
\end{equation}
where $\epsilon>0$ is a design parameter. It guarantees that if a
point $s^{*}$ is obtained with the fixed point algorithm, then this
point does in fact satisfy $Ts^{*}\ll s^{*}$ and not just $Ts^{*}<
s^{*}$. 

Obviously, if a point $s^{*}$ with $Ts^{*}\ll s^{*}$ exists at all,
then it has to be found by the algorithm if only $\epsilon>0$ is small
enough. On the other hand, from practice it is fair to say that larger
$\epsilon$  yield faster convergence (i.e., fewer iterations are
necessary to obtain $s^{*}$). 

Also it could be noted that instead of the maximal $i$ in
\eqref{eq:12} the minimal or even any other unique choice should in
theory do equally well. In particular, this may give rise to a
different pivoting strategy by shifting the pivoting from the homotopy
algorithm to the labeling function.

\subsection{MATLAB code}
\label{sec:matlab-code}

\lstinputlisting[%
label=lst:1, caption={Eaves' algorithm based on the K1 complex
  \cite{Eaves:1972:Homotopies-for-computation-of-fixed-poin:}
  implemented in MATLAB.}, linerange={1-1,36-99999}
%]{/Users/bjoern/Documents/notes/data/76/4416DF-B956-4514-BFEE-7197608E84D0/homotopies/eavesK1.m}
]{eavesK1.m}

Of the output arguments \lstinline{kkmpt} denotes the vector
$s^{*}\in\Rnp$, \lstinline{noit} the number of iterations that have
been consumed by the algorithm to find \lstinline{kkmpt}, and
\lstinline{succ} is either $1$ to denote that the algorithm was
successful and \lstinline{kkmpt} is a point of interest and $0$
otherwise.

Of the input arguments \lstinline{monmap} is a function handle to the
monotone map\break $T\colon \Rnp\to\Rnp$ which satisfies $T(0)=0$. The
parameter \lstinline{r} is the radius $r>0$ of the sphere
$S_{r}\subset \Rnp$, where the algorithm tries to find
$s^{*}$. Lastly, \lstinline{n} denotes the dimension of $\Rnp$.

There are two additional global variables that can be tweaked to
modify the performance of the implementation. They are
\lstinline{eavesK1_MAXREFINE} with a default value of 1000 and
\lstinline{eavesLabel_distance} with a default value of
$10^{-2}$. Their meaning is explained below.\medskip

\subsection{Usage}
\label{sec:usage}
To use the above MATLAB function, one has to implement the monotone
map $T$ of interest into a MATLAB function. An example of this is
given in Listing~\ref{lst:2}.
\begin{lstlisting}[caption={A MATLAB implementation of the monotone map defined in Example~\ref{example:sgt-iss}, cf.\ Section~\ref{sec:examples}.}, label=lst:2]
  function y = T(x)
     y=zeros(size(x));
     for k=1:length(x)-1,
       y(k) = y(k) + x(k+1)^(k+1);
       y(k+1) = y(k+1) + x(k)^(1/(k+1));
     end
     y=y/4;
  end
\end{lstlisting}
Now, to compute a point $s^{*}\in\Rnp$ satisfying $Ts^{*}\ll s^{*}$,
$\|s^{*}\|_{1}=r>0$, one calls
\begin{lstlisting}[numbers=none]
  [s_star,noit,succ] = eavesK1(@T,r,n)
\end{lstlisting}
which should yield the desired result. If necessary, the behaviour of
the implementation can be fine-tuned by specifying
\begin{lstlisting}[numbers=none]
  global eavesK1_MAXREFINE eavesLabel_distance
\end{lstlisting}
and assigning new values to these variables for the maximal number of
iterations, and respectively, the parameter $\epsilon$ appearing in
the labeling function~\eqref{eq:12}.

\subsection{Convergence}
\label{sec:covergence}

Assuming arbitrary precision computations and number representation as
well as suitable choices for \lstinline{eavesK1_MAXREFINE} (maximal
number of iterations) and\break \lstinline{eavesLabel_distance} (i.e.,
$\epsilon>0$ in~\eqref{eq:12}), the algorithm does what is expected:

\begin{theorem}
  Let $T\colon\Rnp\to\Rnp$ be monotone and continuous.  Let $r>0$ and
  assume that $Ts\ngeq s$ for all $s\in S_{r}$.  Assume that there
  exists a point $s^{*}\in S_{r}$ such that $Ts^{*}\ll s^{*}$,
  $0<\epsilon< \min \{ s^{*}_{i} - (Ts^{*})_{i}\}$, where $\epsilon$
  is the parameter in~\eqref{eq:12}.  Then the algorithm in
  Listing~\ref{lst:1} produces a point $s^{**}\in S_{r}$ (possibly
  different from $s^{*}$) satisfying $Ts^{**}\ll s^{**}$, provided
  that the maximal number of allowed iterations is large enough.
\end{theorem}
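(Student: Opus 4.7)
My plan is to combine the complementarity structure of the K1 complex (Theorem~\ref{thm:2}) with the refinement properties of Eaves' homotopy and the open-set nature of the strict inequality $Ts\ll s$ at $s^{*}$ to conclude finite termination.

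First, I would verify that the homotopy has a valid entry door. The hypothesis $Ts\ngeq s$ on $S_{r}$, together with the labeling rule~\eqref{eq:12}, ensures that the vertices $r\cdot e_{i}$ of $\sigma^{0}$ carry pairwise distinct labels (possibly after invoking the trivial-labeling trick built into the K1 level structure to absorb the $\epsilon$-slack at the entry face). Given this complete entry face, Theorem~\ref{thm:2} tells us that every $n$-set generated along the way has either zero or exactly two complete $(n-1)$-subfaces; when it has two, they give a unique ``door-in/door-out'' rule. This rule produces a uniquely determined sequence $\sigma^{0},\sigma^{1},\sigma^{2},\ldots$ of complete $(n-1)$-sets: the path cannot self-intersect (that would require three complete subfaces in a single $n$-set, contradicting Theorem~\ref{thm:2}) and cannot exit through the initial door (by construction of the K1 entry face).

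Second, I would invoke the defining geometric feature of the K1 complex: the simplices are organized by a refinement level, and their diameters decay to zero as the level grows. Hence the centroids of $\sigma^{k}$ have a cluster point $\tilde{s}\in S_{r}$. Since each $\sigma^{k}$ is complete, for each $i\in\{1,\ldots,n\}$ there is a vertex $v^{k,i}\in\sigma^{k}$ with $(Tv^{k,i})_{i}+\epsilon\leq v^{k,i}_{i}$; passing to the limit along a convergent subsequence and using continuity of $T$, each $v^{k,i}$ converges to $\tilde{s}$ and we obtain $(T\tilde{s})_{i}+\epsilon\leq \tilde{s}_{i}$ for all $i$, i.e.\ $T\tilde{s}\ll\tilde{s}$.

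Third, I would use the hypothesis $\epsilon<\min_{i}(s^{*}_{i}-(Ts^{*})_{i})$ together with continuity of $T$ to produce an open neighborhood $U$ of $s^{*}$ in $S_{r}$ on which every point $s$ satisfies $(Ts)_{i}+\epsilon\leq s_{i}$ simultaneously for all $i$. This shows the set of valid answers is not just the single point $s^{*}$ but an open set, so the finite refinement can actually land inside it: once the current $\sigma^{k}$ has diameter smaller than the distance from its vertices to the boundary of such a neighborhood (around either $s^{*}$ or the cluster point $\tilde{s}$), the implementation's stopping criterion (measured by \lstinline{eavesLabel_distance}) triggers and returns some vertex $s^{**}$ with $Ts^{**}\ll s^{**}$.

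The main obstacle, and the step I would spend most effort on, is the termination argument: ruling out that the path keeps ``wandering'' indefinitely through ever-finer levels without triggering the stopping test. This is exactly where the combination of (i) decay of diameters along the K1 path, (ii) closedness of $\{s:(Ts)_{i}+\epsilon\leq s_{i}\}$, and (iii) openness of the strict-inequality set around $s^{*}$ must be packaged together, to guarantee that after sufficiently many iterations the tolerance test must succeed for any sufficiently large \lstinline{eavesK1_MAXREFINE}.
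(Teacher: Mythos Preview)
Your sketch is substantially more than what the paper itself does: the paper's entire proof of this theorem is a one-line appeal to the general convergence result in Eaves' original paper~\cite{Eaves:1972:Homotopies-for-computation-of-fixed-poin:}. What you have written is, in effect, a reconstruction of that cited argument specialised to the present setting (door-in/door-out via Theorem~\ref{thm:2}, diameter decay along K1 levels, and a continuity/compactness pass to the limit), so the two are not in conflict---you are simply supplying the content that the paper outsources.

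Two places in your outline would need tightening before it stands on its own. First, in your Step~3 you lean on the neighbourhood of the \emph{hypothesised} point $s^{*}$, but the path need not accumulate there; the role of the assumption $0<\epsilon<\min_i\{s^{*}_i-(Ts^{*})_i\}$ is only to guarantee that the $\epsilon$-shifted covering sets $\{s:(Ts)_i+\epsilon\le s_i\}$ still satisfy the KKM hypotheses on $S_r$ (so the labelling~\eqref{eq:12} is everywhere defined and the intersection is nonempty). The open set you actually need for termination is the one around the cluster point $\tilde s$ from your Step~2, where you obtained $(T\tilde s)_i+\epsilon\le\tilde s_i$ and hence the \emph{strict} inequality $T\tilde s\ll\tilde s$; this is the open condition that lets a sufficiently small complete simplex fall entirely inside. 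Second, the ``entry door'' issue you flag is real: with the $\epsilon$-label~\eqref{eq:12} the boundary vertices $r\cdot e_i$ need not receive label $i$ (indeed, for $j\ne i$ one has $s_j=0$ so $(Ts)_j+\epsilon\le s_j$ is impossible). Eaves handles this not by a labelling trick on $S_r$ itself but by the artificial top level of the K1 complex, where labels are assigned by fiat; your parenthetical remark gestures at this but a self-contained proof would have to spell it out. Modulo these two points your plan is sound and matches the argument the paper is implicitly invoking.
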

\begin{proof}
  The claim follows from the corresponding more general result
  in~\cite{Eaves:1972:Homotopies-for-computation-of-fixed-poin:}.
\end{proof}

\subsection{Remarks on computational complexity}
\label{sec:remarks-comp-compl}

In each iteration of the algorithm, the map $T\colon \Rnp\to\Rnp$ has
to be evaluated once to compute label of the new vertex. In addition,
$n$ comparisons are necessary to find the old vertex with the same
label as the new one to be dropped.

%XXXX HOW FAST IS THE REFINING? e.g., how fast does $\|v^{i}-v^{k}\|$ go to zero?

\section{Algorithmic solution to Problem~\ref{problem:1}}
\label{sec:algor-solut-probl}

Building upon an implementation as in the previous section, Problem~\ref{problem:1} can now be solved as follows:
\begin{enumerate}
\item Given $r>0$ compute $s^{*}$ using the algorithm in the previous
  section, or using a more sophisticated implementation based on one
  of the algorithms proposed in
  e.g. \cite{AllgowerGeorg:1980:Simplicial-and-continuation-methods-for-:,
    AllgowerGeorg:1990:Numerical-continuation-methods:,
    AllgowerGeorg:1997:Numerical-path-following:}.  If this is
  successful then properties 1 and 2 of Problem~\ref{problem:1} are
  already satisfied.
\item Compute $\{T^{k}s^{*}\}_{k\geq0}$. If this is a null-sequence then $T s
  \ngeq s$ for all $s\in [0,s^{*}], s\ne0$, i.e., property 3 of
  Problem~\ref{problem:1} is satisfied.
\end{enumerate}

The assertion of the second step in this short meta-algorithm relies
on the first statement in Theorem~\ref{thm:1}. For if $T^{k}s^{*}$
tends to zero as $k\rightarrow0$ then $s^{*}\in\mathcal{B}$, the region of
attraction. By the ordering of solutions principle also every point
$s\in[0,s^{*}]$ must belong to $\mathcal{B}$ as well. 

The computational complexity of the second step consists of the
computation of only one trajectory of a discrete-time monotone
system. The trajectory has to be bounded, because due to the ordering
of solutions principle it must be confined to the order interval
$[0,s^{*}]$. Furthermore, due to the monotonicity of $T$ it has to be
non-increasing in every component, which allows to terminate further
computation once $\phi_{\eqref{eq:8}}(k,s^{*})$ is sufficiently small
for some $k\geq0$.

\section{Examples}
\label{sec:examples}

In this section we consider a few numerical examples. The first is a
nonlinear map $T\colon\Rnp\to\Rnp$ which can be defined for any
$n\geq2$. For this map $T$ it is known that system~\eqref{eq:9} is
input-to-state-stable, implying that the origin is globally
asymptotically stable for system~\eqref{eq:8}, and hence the Eaves K1
algorithm should produce an $s^{*}$ for arbitrary $n\geq2$ and $r>0$.

The second example is a statistic generated from randomly chosen
nonnegative matrices $A\in\Rpnn$ with spectral radius less than one. 

A third example shows that for a given $x(0)$ the pure iteration of
$x(k+1)=Tx(k)$ does in general not produce an $x(k)$ such that
$Tx(k)\ll x(k)$, even if $k\geq0$ is large.

\begin{example}
  \label{example:sgt-iss}
  Let $n\geq2$.  Consider the nonlinear map $T\colon\Rnp\to\Rnp$
  defined in
  \cite[Example~IV.1]{Ruffer:2009:Small-gain-conditions-and-the-comparison:}
  given by
  \[ 
  \big(Ts)_{i} = \frac{1}{4}\big(s_{i-1}^{1/i} +
  s_{i+1}^{i+1}\big), \quad s\in\Rnp,
  \] 
  with the convention that $s_{0}=s_{n+1}=0$.  For example, in the
  case $n=5$ one has
  \[
  Ts = \frac{1}{4}
  \begin{pmatrix}
    s_{2}^{2}\\
    \sqrt{s_{1}} + s_{3}^{3}\\
    \sqrt[3]{s_{2}} + s_{4}^{4}\\
    \sqrt[4]{s_{3}} + s_{5}^{5}\\
    \sqrt[5]{s_{4}}\\
  \end{pmatrix}\,.
  \]
  Observe that $T(0)=0$, and that $T$ is obviously monotone and
  continuous.  It has been shown in
  \cite[Example~IV.1]{Ruffer:2009:Small-gain-conditions-and-the-comparison:}
  that the induced system~\eqref{eq:9} is input-to-state
  stable. Moreover, it can be verified directly that for every $r>0$,
  the vector
  \[
  p(r) =
  \begin{pmatrix}
    {r}\\
    \sqrt{r}\\
    \sqrt[3!]{r}\\
    \vdots\\
    \sqrt[n!]{r}
 \end{pmatrix}
 \] 
 satisfies $Tp(r)\ll p(r)$. Since $p\colon\Rp\to\Rnp$ is continuous,
 and in every component unbounded, for every $r>0$ there must exists a
 point $s^{*}\in S_{r}$ such that $Ts^{*}\ll
 s^{*}$. Figure~\ref{fig:sgt-iss} shows how long it takes (in terms of
 iterations) for the Eaves' algorithm to find such a point $s^{*}$.
 
 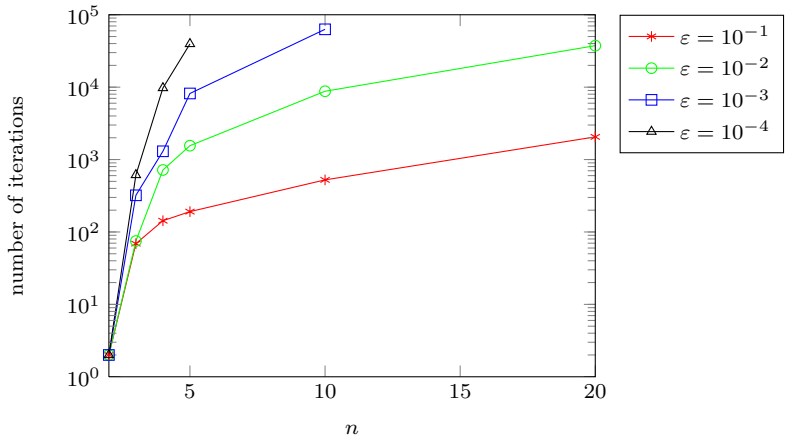
\begin{figure}[htbp]
   \centering
   \beginpgfgraphicnamed{Fig1}%.eps}
   \begin{tikzpicture}
     \begin{semilogyaxis}[
       axis on top,
       scale only axis,
       width=6.4cm,
       height=4.8cm,
       xmin=2, xmax=20,
       ymin=1, ymax=100000,
       legend entries={$\epsilon=10^{-1}$,$\epsilon=10^{-2}$,$\epsilon=10^{-3}$,
         $\epsilon=10^{-4}$},
       xlabel={$n$},
       ylabel={number of iterations}
       ]

       \addplot [
       color=red,
       solid,
       mark=asterisk,
       mark options={solid}
       ]coordinates{
         (2,2) (3,69) (4,143) (5,191) (10,524) (20,2057)
       };

       \addplot [
       color=green,
       solid,
       mark=o,
       mark options={solid}
       ]coordinates{
         (2,2) (3,75) (4,718) (5,1554) (10,8779) (20,37418)
       };

       \addplot [
       color=blue,
       solid,
       mark=square,
       mark options={solid}
       ]coordinates{
         (2,2) (3,321) (4,1301) (5,8181) (10,62939) (20,0)
       };

       \addplot [
       color=black,
       solid,
       mark=triangle,
       mark options={solid}
       ]coordinates{
         (2,2) (3,613) (4,9738) (5,39548) (10,0)
       };
    \end{semilogyaxis}
  \end{tikzpicture}
  \endpgfgraphicnamed
  \caption{Number of iterations needed by Eaves' K1 algorithm for
    $r=10$, and different choices of $n$ and $\epsilon>0$ in
    Example~\ref{example:sgt-iss}. The maximal number of iterations
    was set to $10^{5}$. It took 67 seconds to generate this whole
    plot on an 2.4 GHz Intel MacBook.}
   \label{fig:sgt-iss}
 \end{figure}
\end{example}

\begin{example}
  \label{example:linearstats}
  It is relatively easy to generate many positive $n\times n$ matrices
  with a specified spectral radius in MATLAB. If the spectral radius
  $\rho(A)$ of a nonnegative matrix $A$ is less than one then it
  defines a monotone mapping that should allow for a point $s^{*}\in
  S_{r}$ with $As^{*}\ll s^{*}$. Here we have chosen the spectral
  radius to be $\rho(A)=0.8$ and have generated a number of matrices
  $A\in\Rpnn$ for different choices of $n$. Again we have applied the
  algorithm in Listing~\ref{lst:1} and counted the number of
  iterations needed. The outcome is plotted in
  Figure~\ref{fig:linear}.

  \begin{figure}[htbp]
    \centering
    \beginpgfgraphicnamed{Fig2}%.eps}
    \begin{tikzpicture}
      \begin{semilogyaxis}[
        axis on top,
        scale only axis,
        width=6.4cm,
        height=4.8cm,
        xmin=2, xmax=20,
        ymin=1, ymax=100000,
        legend entries={$\epsilon=10^{-1}$,,$\epsilon=10^{-2}$,,$\epsilon=10^{-3}$,,
          $\epsilon=10^{-4}$,},
        xlabel={$n$},
        ylabel={number of iterations}
        ]

        \addplot[ color=red, solid, mark=asterisk, mark options={solid}
        ] plot[error bars/.cd, y dir=plus, y explicit%, error mark=x
        ]
        table[x=n,y=m,y error=u] {example2_1.dat};
        \addplot[ color=red, solid, mark=asterisk, mark options={solid}
        ] plot[error bars/.cd, y dir=minus, y explicit%, error mark=x
        ]
        table[x=n,y=m,y error=l] {example2_1.dat};

        \addplot[ color=green, solid, mark=o, mark options={solid}
        ] plot[error bars/.cd, y dir=plus, y explicit%, error mark=x
        ]
        table[x=n,y=m,y error=u] {example2_2.dat};
        \addplot[ color=green, solid, mark=o, mark options={solid}
        ] plot[error bars/.cd, y dir=minus, y explicit%, error mark=x
        ]
        table[x=n,y=m,y error=l] {example2_2.dat};

        \addplot[ color=blue, solid, mark=square, mark options={solid}
        ] plot[error bars/.cd, y dir=plus, y explicit%, error mark=x
        ]
        table[x=n,y=m,y error=u] {example2_3.dat};
        \addplot[ color=blue, solid, mark=square, mark options={solid}
        ] plot[error bars/.cd, y dir=minus, y explicit%, error mark=x
        ]
        table[x=n,y=m,y error=l] {example2_3.dat};
    \end{semilogyaxis}
    \end{tikzpicture}
    \endpgfgraphicnamed
    \caption{Mean number of iterations needed for linear maps with
      spectral radius $\rho(A)=0.8$ together with minimal and maximal
      number of iterations needed. Each data point corresponds to
      10 %XXXXXX
      randomly chosen matrices (using MATLAB's
      \lstinline{rand(n)}). Generating this plot took 205 seconds (not
      taking into account generation of the test matrices). The
      maximal number of iterations was set to $10^{5}$.%
      % XXX
    }
    \label{fig:linear}
  \end{figure}
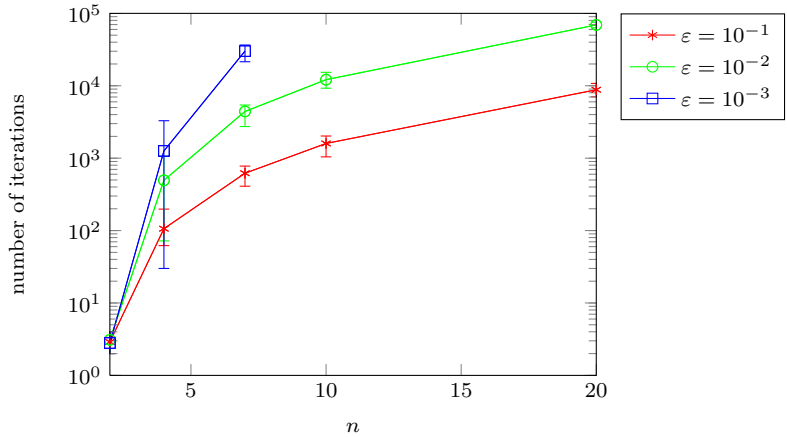
\end{example}

\begin{example}
  \label{example:flipflop}
  Consider the monotone map $T\colon\R^{2}_{+}\longrightarrow\R^{2}_{+}$ given by
  \break
  $T(x)= \big(\sqrt{x_{2}}, \lambda x_{1}^{2}\big)^{T}$, with
  $\lambda\in(0,1)$. Obviously $T(0)=0$ and it is easy to check that
  for any $x\in\Rnp$, $T^{k}x\to0$ as $k\to\infty$. Hence, $T$ is a
  contraction. Yet, given $x\in\R^{2}_{+}$, if not already $Tx\ll x$ then there exists no
  $k\geq 1$ such that $T^{k+1}x\ll T^{k}x$. For we have,
  \[
  T^{2}x =
  \begin{pmatrix}
    \sqrt{\lambda} x_{1}\\
    \lambda x_{2}
  \end{pmatrix},
  \quad
  Tx =
  \begin{pmatrix}
    \sqrt{x_{2}}\\
    \lambda x_{1}^{2}
  \end{pmatrix},
  \quad
  x=
  \begin{pmatrix}
    x_{1}\\
    x_{2}
  \end{pmatrix}.
  \]
  Assuming that $Tx\not\ll x$, we have either
  $(Tx)_{1}=\sqrt{x_{2}}\geq x_{1}$, which implies $(T^{2}x)_{2}x=
  \lambda x_{2}\geq\lambda x_{1}^{2}=(Tx)_{2}$. Otherwise, we have
  $(Tx)_{2}=\lambda x_{1}^{2}\geq x_{2}$, implying that
  $(T^{2}x)_{1}=\sqrt{\lambda}x_{1}\geq \sqrt{x_{2}}=(Tx)_{1}$. This
  deduction repeats inductively.  As a consequence, a pure iteration
  of the map $T$ cannot yield a solution to Problem~\ref{problem:1}.
\end{example}

\section{Conclusions}
\label{sec:conclusions}

\begin{acknowledgements}
  B.~S.~R\"uffer has been partially supported by the Australian
  Research Council's \emph{Discovery Projects} funding scheme (project
  number DP0880494) %.
  and the Japan Society for the Promotion of Science. F.~R.~Wirth has
  been supported by the German Science Foundation (DFG) within the
  priority programme 1305: Control Theory of Digitally Networked
  Dynamical Systems.

  The authors would like to thank Priv.-Doz.\ Dr.\ Sergey Dashkovskiy
  for numerous valuable discussions prior to this work.
\end{acknowledgements}

\def\cprime{$'$}

\end{document}